\title[Topological State Space Inference for Dynamical Systems]{Topological State Space Inference for Dynamical Systems}
\DeclareMathAlphabet{\mathpzc}{OT1}{pzc}{m}{it}
\def\BState{\State\hskip-\ALG@thistlm} \makeatother
\patchcmd{\@maketitle}{\begin{center}}{\begin{flushleft}}{}{}
\patchcmd{\@maketitle}{\begin{tabular}[t]{c}}{\begin{tabular}[t]{@{}l}}{}{}
\patchcmd{\@maketitle}{\end{center}}{\end{flushleft}}{}{}
\newcommand\be{\begin{equation}}
\newcommand\ee{\end{equation}}
\newtheorem{thm}{Theorem}[section]
\newtheorem{lem}[thm]{Lemma}
\newtheorem{dfn}[thm]{Definition}
\newcommand{\Real}{\mathbb{R}}
\newcommand{\N}{\mathbb{N}}
\def\Real{\mathbb{R}}
\newcommand{\R}{\ensuremath{\mathbb{R}}}
\newcommand{\Mor}{\mathtt{Mor}}
\newcommand{\VR}{\bm{\Sigma}}
\newcommand{\Sph}{\bm{S}}
\author{%
 \Name{Mishal {Assif P K}} \Email{mishal2@illinois.edu}\\
 \addr Department of Electrical and Computer Engineering, Grainger College of Engineering, University of Illinois at Urbana-Champaign, Urbana, IL
 \AND
 \Name{Yuliy Baryshnikov} \Email{ymb@illinois.edu}\\
 \addr Departments of Mathematics and Electrical and Computer Engineering, Grainger College of Engineering, University of Illinois at Urbana-Champaign, Urbana, IL
}
\begin{document}

\maketitle

\begin{abstract}%
We present a computational pipe aiming at recovery of the topology of the underlying phase space from observation of an output function along a sample of trajectories of a dynamical system. \end{abstract}

\begin{keywords}%
State space realization, topological data analysis, nonlinear dynamics.
\end{keywords}

\section{Introduction}
%The problem of state space realization -- i.e., reconstruction of a space on supporting a dynamical system and a function, such that the observations of the function along the trajectory, are going back Bellman and Kalman, to the earliest day of the modern systems theory \cite{schutter_minimal_2000}. While the linear theory, elegant in its simplicity, shaped much of the linear control theory paradigm, its nonlinear versions are nowhere close in their naturality.

The problem of state space realization, that is, the reconstruction of the underlying space of a dynamical system and an observation function, remains foundational in systems theory. At its core, state space realization aims to reconstruct or identify a state space model that accurately describes the internal dynamics of a system based solely on observed data or system outputs. This problem originated with the work of Richard Bellman and Rudolf Kalman, whose pioneering research in the 1960s laid the groundwork for modern systems theory \citep{schutter_minimal_2000}.

The state space realization problem for linear models is well studied. Classical elegant solutions are available in this case relying, in essence on the theory of modules over the polynomial rings. Linearity enables a clear and tractable description of system dynamics, where system dynamics can be modeled as linear transformations, providing well-established methods for state space realization.

Extensions to nonlinear systems introduce, however, substantial complexity. Nonlinear systems often exhibit intricate behaviors, including bifurcations and chaos. Consequently, realizing or reconstructing a state space for such systems is far less straightforward. In this note, we present an attempt to address the nonlinear version of the state space realization problem using a mathematical theory as a guiding light, much as the LTI realization theory relied on the structure theory of modules over polynomial rings in one variable.

\subsection{Setup}
Consider a dynamical system $(M, \phi)$ where $M$ is a topological space and $\phi:\R \times M\to M$ is a one-parametric transformation group, that is, $\phi$ is a continuous function
\[
\R \times M \ni (t, m) \rightarrow \phi(t, m) := \phi_t(m) \in M \text{ such that } \phi_0(m) = m, \ \phi_t(\phi_s(m)) = \phi_{t+s}(m)
\]
%In most of our examples, $M$ will be a smooth compact manifold, and $\phi$ will be the shift along the trajectories of a smooth vector field $v$. 
In most of our examples, $M$ will be a smooth compact manifold and the transformation group will be generated by flows of a vector field $v : M \rightarrow TM$. We will denote by $\pi_x : \R \rightarrow M$ the trajectory of $v$ such that
\begin{equation}\label{eq:dynamics}
\pi_x(0) = x, \ \frac{d \pi_x(t)}{dt} = v(\pi_x(t)).
\end{equation}
%In other words, $\pi_x$ is the unique solution to the ODE defined by the vector field $v$. We can then define a bona fide transformation group on $M$ by setting
%\[
%\phi(t, m) = \pi_m(t).
%\]
Note that in such cases, the transformation group $\phi$ is at least as smooth as the vector field $v$.

The question we investigate here is {\em whether the topology of the space $M$ can be recovered from a finite number of trajectories $\{ \pi_{x_k} \}_{k=1,..,n}$} of such a dynamical system on $M$. We will see that this is indeed possible and provide a principled computational pipeline and a methodology to extract topological information of $M$ from such trajectories.

\subsection{Data Analytic setup}
We are interested in building an algorithmic setup towards inference of the topology from observation data. To make the general problem more amenable to such data analysis, we add the following constraints to the problem setting:
\begin{enumerate}
\item The data are collected at discrete time intervals, whether it be actual readings from sensors or the result of numerical algorithms or simulations. In addition, such readings are collected only up to a finite time limit. We cannot hope to get entire trajectories $\pi_x$ starting from a point $x$. Instead we will get discrete trajectories obtained by performing a finite sampling along the trajectory at regular time intervals.
%\[\pi^{\delta}_x = (\pi_x(0), \pi_x(\delta), \pi_x(2\delta), ..., \pi_x((k-1)\delta)) \in M^{k}.\]
The sampling interval $\delta$ will be assumed to be constant across all trajectories and will be dropped from the notation hereafter.
\item One observes only a finite sample of such trajectories, initialized according to some distribution, which may or not be under our control.
\item One observes not the points of the state space $M$ (which is unknown), but rather a smooth output function
\[
g: M \rightarrow V \cong \R^p
\]
such that only $g^n(x) := (g(\pi_x(-l\delta)), g(\pi_x(-(l-1)\delta)), ..., g(\pi_x(k\delta))) \in V^n, n=k+l+1$ is available to us. The dimension of $V$ can be much smaller than that of $M$, making this problem more challenging. 
\end{enumerate}

Hence the overall problem we investigate here is: \textit{What topological information of $M$ can be extracted from a finite set of observations of uniformly sampled trajectories $\{ g^n(\pi_{x_\alpha}) \}_{\alpha=1,..,N}$ of a dynamical system on $M$?}

\subsection{Takens' Embedding}
There is a well-known answer to the question of the state realization due to the Takens embedding theorem \citep{Tak06}.

\begin{thm}%[Takens embedding theorem]
Let $M$ be a compact smooth manifold, and $n \geq 2\dim(M)+1$ an integer. Then, for a {\em generic} pair $(\phi, g)$, where $\phi:M\to M$ is a $C^2$-diffeomorphism, and $g$ a function in $C^2(M, \R)$, the map
\[
	M \ni x \mapsto ( g(x), g(\phi(x)), ..., g(\phi^{n-1}(x)) \in \R^k
\]
is an {\em embedding} of $M$.
\end{thm}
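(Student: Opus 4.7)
The plan is to prove that the set of pairs $(\phi,g)$ for which the delay map
\[
F_{\phi,g}\colon x \mapsto \bigl(g(x),\, g(\phi(x)),\, \ldots,\, g(\phi^{n-1}(x))\bigr)
\]
is an embedding is residual in $\mathrm{Diff}^2(M)\times C^2(M,\R)$. Since $M$ is compact, an injective immersion is automatically a smooth embedding, so it suffices to establish two separate genericity statements: that $F_{\phi,g}$ is an immersion, and that it is injective. The main tool throughout is Thom's multijet transversality theorem, applied to jets of $g$ over tuples of distinct points of $M$.

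First I would restrict to a Kupka--Smale type residual class of $\phi\in\mathrm{Diff}^2(M)$ for which the periodic points of period at most $n$ form a finite set, and the derivatives $D\phi^p$ at any point of period $p\le n$ have simple spectrum. With such a $\phi$ fixed, the remaining task is to show that the set of good $g\in C^2(M,\R)$ is residual. I then split the analysis according to whether the base point (or pair of points) lies on one of these short periodic orbits.

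For the immersion condition at a point $x$ whose orbit segment $\{x,\phi(x),\ldots,\phi^{n-1}(x)\}$ consists of $n$ distinct points, the rows of $dF_{\phi,g}(x)$ are $dg(\phi^k(x))\circ D\phi^k(x)$, and the $1$-jets of $g$ at these $n$ points can be prescribed independently. The rank-deficiency locus has codimension at least $n-d+1\ge d+2$ in the multijet space, where $d=\dim M$, so for generic $g$ it never intersects $M$. For injectivity on pairs $(x,y)$ with disjoint length-$n$ orbit segments, the equation $F_{\phi,g}(x)=F_{\phi,g}(y)$ is a codimension-$n$ constraint on $n$ independent pairs of $0$-jets; since $n>2d=\dim(M\times M)$, generic $g$ avoids it.

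The main obstacle is the short-periodic locus, where the multijet argument degenerates because the covectors $dg(\phi^k(x))$ repeat cyclically in $k$. At a point $x$ of period $p\le n$, diagonalising $D\phi^p(x)$ recasts the rows of $dF_{\phi,g}(x)$ as a Vandermonde-type system in the eigenvalues of $D\phi^p(x)$; the simple-spectrum hypothesis secured in the Kupka--Smale step, combined with a non-degeneracy condition on $dg(x)$ relative to the eigenspaces, restores maximal rank. The hypothesis $n\ge 2d+1$ supplies just enough delay coordinates both to fill the tangent space and to separate distinct points on the same orbit. Since there are only finitely many such periodic orbits, each contributes a closed, nowhere-dense condition on $g$, and the countable intersection of the residual sets constructed in each step is residual, which proves the theorem.
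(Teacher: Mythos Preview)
The paper does not give its own proof of this statement: it is quoted as Takens' embedding theorem, with a citation, and is invoked only as background motivation. There is therefore nothing in the paper to compare your argument against.

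Your outline is the standard route to Takens' theorem, essentially following Takens' original 1981 argument and later refinements (e.g.\ Sauer--Yorke--Casdagli): reduce to a class of diffeomorphisms with finitely many short periodic orbits and simple spectrum at them, then use multijet transversality for $g$ away from the periodic locus and a Vandermonde-type linear-algebra argument on it. Two small points worth tightening. First, you should note explicitly that being an embedding is an open condition on $(\phi,g)$ when $M$ is compact, so that density of the good set already gives open-and-dense; your fiberwise construction (perturb $\phi$ into the Kupka--Smale-type class, then perturb $g$) yields density in the product but not directly residuality. Second, the injectivity step should also treat pairs $(x,y)$ whose length-$n$ orbit segments overlap without coinciding, for instance $y=\phi^j(x)$ with $x$ aperiodic, where the multijet count is slightly different from the ``disjoint orbit segments'' case you describe. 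Neither issue changes the overall strategy.
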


Genericity here is understood in the standard way: the pairs $(\phi,g)$ which satisfy this condition form an open, everywhere dense subset in the Banach manifold of such pairs.

Takens embedding theorem guarantees that for generic dynamical systems and observations, given the infinite collection of observations of uniformly sampled trajectories starting at any point on the manifold $M$ with length more than twice the dimension of $M$, we can exactly recover $M$ up to a diffeomorphism. 

While Takens theorem is encouraging and suggests that our problem is not completely intractable, it does not address our problem, being a purely existence result, not providing a viable strategy for reconstructing $M$ from the trajectories, even in the ideal case where trajectories starting from all possible initial points are available.

In our setting, the nature of the sample one observes, - a finite sample, -  is intrinsically different from the smooth manifold $M$. Nevertheless, {\em topological invariants}, namely, the homology groups of the state space, can be recovered from finite observation of the trajectories, as we argue below. 

\subsection{Related Results}
The problem of the state space realization of nonlinear dynamical systems from observations has been addressed by many authors. Thus, \citep{Jac80} develops an analogue of the Taken's embedding theorem for nonlinear control systems, while \citep{schaft86} looks at the action of the dynamics on the functions in a similar manner, leading to, essentially, a version of the Koopman formalism \citep{budisic12}. 

The recent work \citep{otto2023learning} considers data driven state space recovery using Machine Learning methods. Among other works exploring such model-free approaches one should mention \citep{schmidt2009distilling, brunton2016discovering,champion2019data}.

\section{Mathematical Background}
The motivation for our theory comes from the beautiful work of Cohen, Jones, and Segal \citep{Coh95}, which we then couple with the well-established apparatus of the Topological Data Analysis (TDA) \citep{harer2010}.

\subsection{Basic Definitions}
Here is their setup. Assume that $M$ is a smooth closed Riemannian manifold and $f : M \rightarrow \R$ a smooth Morse function on $M$. The key idea of \citep{Coh95} is to describe a combinatorial
construction that reconstructs the topology of $M$ from the space of flow lines of the {\em gradient vector field} $\nabla f$. 

To this end, they introduced the enriched category\footnote{The language of category theory (which can be looked up in, say, \citep{Riehl}) can be avoided, but for the space reasons we just follow the original presentation. Enriched category here just means that the morphisms between any objects carries an extra structure (of the topological space).}$\mathcal{C}_f$. 
The objects of this category are critical points of $f$, and the space of morphisms $\Mor(a_-,a_+)$ between two critical points $a_-, a_+$ is the closure of the space of the flow lines $\gamma:\Real\to M$ of the gradient vector field $v$ such that $\gamma(t)\to a_\pm$ as $t\to \pm \infty$.

Concatenating smooth flow lines (which have matching target and source, respectively) allows one to define a continuous composition $\Mor(a,b)\times \Mor(b,c)\to \Mor(a,c)$.
%of the gradient vector field of $f$ that connects two critical points: if $\gamma_1\in \Mor(a, b)$ and $\gamma_2\in \Mor(b,c)$ (here $a,b,c$ are three critical points of $f$) then , given a piecewise flow line from $a$ to $b$ and a piecewise flow line from $b$ to $c$, we can easily construct a piecewise flow line from $a$ to $c$ by appending the latter flow line to the former. This resulting flow line defines the composition law of morphisms in $\mathcal{C}_f$. 

The key result of \citep{Coh95} is that the {\em classifying space of the category} $\mathcal{C}_f$, denoted by $\mathcal{BC}_f$, is homotopy equivalent to $M$. 
Recall that the classifying space $\mathcal{BC}_f$ of a category is a simplicial space whose spaces of $k$-simplices are parametrized by $k$-tuples of composable morphisms in $\mathcal{C}_f$, modulo a natural identification procedure. Altogether, these identification operations yield a combinatorial procedure of gluing together the pieces (topological spaces $\Mor(\cdot, \cdot)$), resulting in a topological space, the classifying space of the category $\mathcal{C}_f$
The relation of this space to the original manifold $M$ is given by the theorem:

\begin{thm}[\citep{Coh95}]
Let $f:M \rightarrow \R$ be a Morse function on the closed Riemannian manifold $M$. Let $\mathcal{C}_f$ be the topological category whose objects are the critical points of $f$ and whose space of morphisms between two critical points is the space of piecewise flow lines of the gradient vector field of $f$ connecting the two critical points and composition of morphisms given by composition of piecewise flow lines, then
\begin{enumerate}
\item the classifying space $\mathcal{BC}_f$ is homotopy equivalent to $M$,
\(
	\mathcal{BC}_f \simeq M.
\)
\item the classifying space $\mathcal{BC}_f$ is homeomorphic to $M$ if $f$ is a (generic) Morse function whose gradient flow satisfies the Morse-Smale transversality conditions,
\(
	\mathcal{BC}_f \cong M.
\)
\end{enumerate}
\end{thm}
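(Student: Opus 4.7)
The plan is to exploit the classical Morse-theoretic picture relating $f$ to a cellular decomposition of $M$, refined under the Morse--Smale transversality hypothesis. The central objects are the moduli spaces $\mathcal{M}(a_-, a_+)$ of unparametrized gradient flow lines connecting critical points, obtained from $\Mor(a_-, a_+)$ by quotienting by the time-reparametrization $\R$-action (and adding the endpoints). Under Morse--Smale transversality, stable and unstable manifolds of distinct critical points meet transversally, so each $\mathcal{M}(a_-, a_+)$ is a smooth manifold of dimension $\ind(a_-) - \ind(a_+) - 1$ when nonempty.

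The first major step is to establish the structure of the compactifications $\overline{\mathcal{M}}(a_-, a_+)$. Standard gluing and compactness arguments from Morse theory endow them with a manifold-with-corners structure whose codimension-$k$ boundary stratum is canonically identified with the disjoint union, over chains $a_- = b_0, b_1, \ldots, b_k = a_+$ of intermediate critical points, of the products $\overline{\mathcal{M}}(b_0, b_1) \times \cdots \times \overline{\mathcal{M}}(b_{k-1}, b_k)$. These strata are precisely the spaces of piecewise flow lines, i.e., the morphism spaces of $\mathcal{C}_f$, which is the key compatibility between the geometry of the flow and the categorical bar construction.

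Next I would unpack the geometric realization $\mathcal{BC}_f$: its $n$-cells are $\Delta^n \times \Mor(a_0, a_1) \times \cdots \times \Mor(a_{n-1}, a_n)$, glued along face identifications encoding composition of morphisms on inner faces and dropping endpoint objects on outer faces. The compatibility between these identifications and the corner stratification of $\overline{\mathcal{M}}(a_-, a_+)$ allows the definition of a continuous evaluation map $\Phi: \mathcal{BC}_f \to M$ sending the simplex coordinates $(t_1, \ldots, t_{n-1})$ to the point reached along the prescribed broken flow line at the corresponding position, with $f$ serving as an intrinsic clock. Conversely, every regular point $x \in M$ lies on a unique flow line, giving a preimage in $\mathcal{BC}_f$; the critical points themselves correspond to the $0$-simplices, and the collapsing face relations of the bar construction handle coherence at the boundaries.

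The concluding step is to verify that $\Phi$ is a homotopy equivalence in general, and a homeomorphism under Morse--Smale transversality. I would filter both sides by the sublevel sets $\{f \le c\}$ and argue inductively on critical values: passage through a critical value associated with a critical point of index $k$ attaches a $k$-cell on the manifold side, and on the classifying space side incorporates the newly compactified moduli spaces emerging from the new object in $\mathcal{C}_f$; these attachments can be matched cell-by-cell. I expect the main obstacle to be step two, the precise manifold-with-corners structure on $\overline{\mathcal{M}}(a_-, a_+)$ and its coherent compatibility with the simplicial face maps in the bar construction: this requires genuine gluing analysis and is the analytic heart of the theorem. The upgrade from homotopy equivalence to homeomorphism in the Morse--Smale case then follows from the regularity of the induced decomposition of $M$ by unstable manifolds, which matches the CW decomposition of $\mathcal{BC}_f$ exactly rather than only up to homotopy.
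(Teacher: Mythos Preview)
The paper does not contain a proof of this theorem: it is stated with the citation \texttt{[Coh95]} and used purely as motivational background for the slack-distance construction that follows. There is therefore no ``paper's own proof'' to compare your proposal against.

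That said, your sketch is a faithful outline of the strategy in the original Cohen--Jones--Segal paper: compactify the moduli spaces of flow lines to manifolds with corners whose stratification matches the simplicial face structure of the bar construction, build an evaluation map $\mathcal{BC}_f \to M$, and argue inductively over sublevel sets. Your identification of the gluing/corner analysis as the analytic heart is accurate. If you were asked to supply a proof here, the appropriate response would simply be to cite \texttt{[Coh95]} rather than reproduce this argument, since the present paper treats the result as an imported black box.
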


\subsection{Topological Data Analysis}
We give a brief introduction to the relevant tools (see \citep{harer2010} for a general reference). 

Let $X$ be a compact metric space (say, a manifold). Given a finite sample of points $\mathcal{A}=\{x_\alpha\}, \alpha\in A$ drawn from $X$ (and the induced distances between the points), persistent homology is a tool to recover the topological features (e.g., the homology groups $H_k(X)$ in dimensions $k \in \N$) of the space $X$ from the discrete data $\mathcal{A}$ alone. \footnote{The homology group in dimension $k$ is a vector space whose dimensions reflects the number of $k$-dimensional holes in $X$. Although there are other important topological features of a space, homology groups are particularly easy to compute algorithmically and so a large part of TDA focuses on these groups.}

A finite collection of open subsets $\{U_\alpha\}_{\alpha\in A}$ covering $X$ is a {\em nice} cover if any intersection of these subsets is either empty or contractible. In this situation, the classical {\em Nerve Theorem} implies that a combinatorial object, the simplicial complex formed by the subsets $S\subset A$ for which the intersection of the open sets $\cap_{\alpha\in A} U_\alpha$ is non-empty, is homotopy-equivalent to $X$, thus opening an effective procedure of computing homologies of $X$. 

A collection of open metric balls $U_\alpha=B^\circ_r(x_\alpha), \alpha\in A$, could serve as such an open collection, if one could overcome two difficulties. 

One is that detecting the non-empty intersection of those balls might be difficult. An effective palliative is to work rather than with the nerve with the {\em Vietoris-Rips} simplicial complex: here one declares a collection $S\subset A$ to form a simplex, if all {\em pairwise} distances between points $x_\alpha, \alpha\in S$ are at most $r$. This works, at least for dense enough samples,  for appropriate $r$ \citep{Lat}.

The other difficulty is to find an appropriate scale $r$: choose it too small, and there are no nontrivial simplices; too large, and the resulting space is one high-dimensional simplex.
 
To overcome this problem, one considers a {\em filtered simplicial complex}, an increasing collection of simplicial complexes $\Sigma_r, r\in \Real, \Sigma_r\subset \Sigma_{r'} \mbox{ for } r<r'$. In our context, $\Sigma_r$ is the Vietoris-Rips complex corresponding to the cutoff distance $r$.

In this setting, one can keep track of the modules of cycles and boundaries in $\Sigma_r$ as they evolve with $r$. A {\em persistent} cycle, i.e., one that emerged for small $r$ but is not patched until much larger $r$ is deemed to represent the true homology of the space $X$. The theory developed over the past two decades allows one to set up efficient computational procedures for finding them \citep{harer2010}.

\section{Inferring Topology from Observations}
We will be deploying the tools of Persistent Homology for the metrics on the spaces of samples of trajectories we define relying on the intuitive picture generalizing the Cohen-Jones-Segal (CJS) construction. In a contrast with their approach, we allow arbitrary smooth dynamics, not requiring it being gradient.

\subsection{Approximations}
The key difference of CJS setup from Takens embedding is its reliance on the infinitely long trajectories. This forces corresponding adjustments to any computational implementation.

Consider the space $\Pi_T=\{\pi_t(x)(t),x\in M, t\in [-T,T]\}$ of the fragments of trajectories of the length $2T$. They are  parameterized by their centers $x$ (so that the space is diffeomorphic to $M$). Defining the distance between the trajectories in the standard way, using $C^0$ or similar norm would essentially reproduce Takens (or Koopman) paradigm. Instead, we use the distance that accounts for proximity of long subintervals of the trajectories.

As we intend to recover the state space from observations, we need to measure rather the distances between the trajectories of {\em observed} values. These considerations lead us to the following 

\begin{dfn}[Slack distance]\label{dfn:slack}
Fix $T$, $n=2k+1$, and $\delta$ such that $T=k\delta$. For a given observation function $g:M \to V\cong \Real^d$, let
\[
y^n(x):=(g(\pi_{x}(-k\delta)),\pi_{x}(-(k-1)\delta))\ldots,g(\pi_{x}((k-1)\delta)),g(\pi_{x}(k\delta)))\in V^n.\] 
Let $y^n_1=y^n(x_1), y^n_2=y^n(x_2)$ be two elements of $V^n$ (i.e., two length $n$ samples of trajectories of the dynamical system on $M$). 

We say that $y^n_1$ and $y^n_2$ are at slack distance at most $(t, \epsilon)$ if the subsequences
\[
( y^n_1(i), y^n_1(i+1), \ldots, y^n_1(i+n-s) ) \text{ and } ( y^n_2(j), y^n_2(j+1), \ldots, y^n_2(j+n-s) ) \in V^{n-s}
\]
are at the $\sup$ distance at most $\epsilon$ for some $s\leq t$. 

We say that $x_1, x_2\in M$ are at slack distance $(t, \epsilon)$ if $y^n_1=y^n(x_1), y^n_2=y^n(x_2)$ are.
\end{dfn}
In other words, $x_1, x_2 \in M$ are at slack distance $(t, \epsilon)$ if the measurements along the trajectories $\pi_{x_1}$, $\pi_{x_2}$ are $\epsilon$-close to each other for at least $L$ steps, for some $L\geq n-t$.

\begin{lem}\label{lem:slack}
If $y^n_1, y^n_2$ are at the slack distance $(s,\epsilon)$, and $y^n_2, y^n_3$ are at the slack distance $(s',\epsilon')$, then $y^n_1, y^n_3$ are at the slack distance at most $(s+s', \epsilon+\epsilon')$.
\end{lem}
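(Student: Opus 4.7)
The plan is to reduce the transitivity claim to two standard facts: that two long contiguous windows in a sequence of fixed length must overlap in a long contiguous subwindow, and that the sup-norm itself satisfies the triangle inequality.

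\medskip

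\noindent\textbf{Step 1: Unpack the hypotheses into explicit alignments.} By hypothesis there exist an integer $s_{12}\le s$ and indices $i_1,i_2$ such that the windows
\[
W_{12}^{(1)}=[i_1,i_1+n-s_{12}]\subset\{1,\ldots,n\},\qquad W_{12}^{(2)}=[i_2,i_2+n-s_{12}]\subset\{1,\ldots,n\}
\]
satisfy $\|y^n_1(i_1+k)-y^n_2(i_2+k)\|\le\epsilon$ for every $k\in\{0,\ldots,n-s_{12}\}$. Similarly, there exist $s_{23}\le s'$ and indices $j_2,j_3$ with analogous windows $W_{23}^{(2)},W_{23}^{(3)}$ of common length $n-s_{23}+1$, along which $y^n_2$ and $y^n_3$ are within $\epsilon'$ in sup norm.

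\medskip

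\noindent\textbf{Step 2: Intersect the two windows living in $y^n_2$.} Both $W_{12}^{(2)}$ and $W_{23}^{(2)}$ are contiguous intervals in $\{1,\ldots,n\}$ of lengths $n-s_{12}+1$ and $n-s_{23}+1$ respectively. A simple counting argument shows that their intersection $W^{(2)}:=W_{12}^{(2)}\cap W_{23}^{(2)}$ is a (possibly empty) contiguous interval of length at least
\[
(n-s_{12}+1)+(n-s_{23}+1)-n \;=\; n-s_{12}-s_{23}+2 \;\ge\; n-(s+s')+2.
\]
(If this number is non-positive the conclusion is vacuous, so assume it is positive.)

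\medskip

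\noindent\textbf{Step 3: Transport $W^{(2)}$ to $y^n_1$ and $y^n_3$.} The alignments in Step 1 give affine bijections $\tau_{12}$ from $W_{12}^{(2)}$ onto $W_{12}^{(1)}$ and $\tau_{23}$ from $W_{23}^{(2)}$ onto $W_{23}^{(3)}$, both just index shifts. Restricting to $W^{(2)}$ produces corresponding contiguous windows $W^{(1)}\subset\{1,\ldots,n\}$ and $W^{(3)}\subset\{1,\ldots,n\}$ of length at least $n-(s+s')+1$, together with an index shift identifying them. For every corresponding triple of indices $(p_1,p_2,p_3)$ one has
\[
\|y^n_1(p_1)-y^n_2(p_2)\|\le\epsilon,\qquad \|y^n_2(p_2)-y^n_3(p_3)\|\le\epsilon',
\]
so by the triangle inequality $\|y^n_1(p_1)-y^n_3(p_3)\|\le\epsilon+\epsilon'$.

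\medskip

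\noindent\textbf{Step 4: Read off the slack distance.} The windows $W^{(1)}$ and $W^{(3)}$ therefore realize a pair of aligned subsequences of length at least $n-(s+s')$ on which $y^n_1$ and $y^n_3$ agree within $\epsilon+\epsilon'$ in sup norm, which is exactly the statement that $y^n_1,y^n_3$ are at slack distance at most $(s+s',\epsilon+\epsilon')$.

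\medskip

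\noindent The only non-routine step is Step 2, and the obstacle there is merely bookkeeping: one must verify that two contiguous windows sitting in a host interval of length $n$ and jointly covering more than $n$ indices must overlap in a contiguous block whose length is controlled from below by the elementary inclusion--exclusion estimate above. Everything else is an application of the triangle inequality for the sup norm entrywise.
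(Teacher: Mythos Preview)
Your proof is correct and follows essentially the same approach as the paper: intersect the two matching windows inside $y^n_2$, note that the overlap has length at least $n-(s+s')$, and apply the triangle inequality entrywise. The paper's version is a one-paragraph sketch that leaves the index bookkeeping implicit, whereas you spell out the alignments and the inclusion--exclusion bound explicitly.
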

\begin{proof}
There are $s$ indices at the endpoints of $y^n_1$ and $y^n_2$ that are at distance greater than $\epsilon$ and $s'$ indices at the endpoints of $y^n_2$ and $y^n_3$ that are at distance greater than $\epsilon'$. Outside the union of these indices, which has length at most $s+s'$, the elements of pairs of trajectories $(y^n_1, y^n_2)$ and $(y^n_2, y^n_3)$ are within distance $\epsilon$ and $\epsilon'$ respectively. This means the elements of $(y_1, y_3)$ are within distance $\epsilon+\epsilon'$ outside a set of indices of length $s+s'$.
\end{proof}

As a corollary, for any convex homogeneous degree $1$ function $d_\pi$ of $s$ and $d$, $d_\pi(s,d)$ satisfied the triangle inequality.
The slack distance naturally gives rise to a bifiltration whose vertices represent trajectories of observations from a dynamical system and an edge appears between two trajectories at parameter $(k, \epsilon)$ if the trajectories are at slack distance
$(k, \epsilon)$. 

%The central thesis of this note is that the topology of the state space of the dynamical system can be recovered from this bifiltration.

%\section{Computational Pipeline}

\subsection{Vietoris-Rips Complexes for Slack Distance}

Consider a sample $\mathcal{A}$ of $N$ trajectories of the dynamical system \eqref{eq:dynamics}. For each of these trajectories $\pi_{x_l}:[-T,T]\to M, l=1,\ldots, N$, we assume that an output function $g:M\to V$ is observed at equispaced intervals, resulting in a collection of observations $y^n_l$ of length $n$. Our objective of is to recover the topology of $M$ from this set of observables. 

The definition of the slack distance (and the Lemma \ref{lem:slack}) provide for a flexible construction of filtrations by Vietoris-Rips complexes with respect to slack distance.

Namely, we define the space $\VR(t, r)$ as the Vietoris-Rips complex on $\mathcal{A}$ corresponding to parameters $(t,r)$. In other words, the simplices of $\VR(t, r)$ are given by the condition 
\begin{equation*}
\VR(t,r)= \{\sigma=(\alpha_0,\ldots \alpha_d): \mbox{ slack distance between } x_{\alpha_i}, x_{\alpha_j} \mbox{ is at most } (t,\epsilon)\}.
\end{equation*}

%For each $(\epsilon, t) \in \mathbb{R_+}\times[n]$, define $\mathcal{G}(\epsilon, t)$ as the graph with vertices $[m]$, and an edge $(i, j)$ if the trajectories
%$x^i$ and $x^j$ have subtrajectories of length greater than $t$ such that they stay within distance $\epsilon$, i.e the vertex set
%\(V\left(\mathcal{G}(\epsilon, t) \right) = \{1,2,...,m\} 
%\)
%and edges
%\begin{equation*}
%    (i, j) \in E\left(\mathcal{G}(\epsilon, t) \right) \text{ if }\exists s_1, s_2 \text{ such that $d(x_i[s_1 + s], x_j[s_2+s]) < \epsilon$ for $s=0,..,t-1$}.
%\end{equation*}

The slack distance between two trajectories for fixed slack $t\leq n$ can be calculated in $O(n^2 \log(n))$ time using dynamic programming, and the corresponding bifiltration can be built in $O(n^2\log(n)N^2)$ time. The algorithm also requires $O(n^2)$ space. 

%Once the Matching substring distances are computed for all pairs of trajectories, we can create the edges of the graph $\mathcal{G}(\epsilon, t)$ using the condition that an edge $(i,j)$ belongs to $\mathcal{G}(\epsilon, t)$ only if $\epsilon > \mathtt{ms\_dist}(t)$. 

The pseudocode of the algorithm is shown as Algorithm \ref{alg:two} below.

\SetKwComment{Comment}{/* }{ */}

\begin{algorithm}
\small
\caption{Algorithm for building the MSD bifiltration}\label{alg:two}
\KwData{ A pair of trajectories $x^i, x^j$ both of length $n$}
\KwResult{List $\mathtt{ms\_dist}$ of length $n$ containing the Matching substring distances between $x^i$ and $x^j$}
\BlankLine
Compute distance matrix $d(k,l) = \|x^i_k - x^j_l\|$\\
$\mathtt{srtd\_indices} \leftarrow argsort(d(k,l))$\\
$\mathtt{ms\_dist}[i] = \infty \ \text{ for } i \in \{0,1,...,n-1\}$\\
\For{$i,j \in \mathtt{srtd\_indices}$}{
    $\mathtt{init\_indices}[i,j] = i,j$ \\
    $\mathtt{final\_indices}[i,j] = i,j$ \\
    $\mathtt{len} = 1$ \\
    \uIf{$i+1,j+1 \in \mathtt{init\_indices}$}{
        $\mathtt{init\_indices}[i,j] = \mathtt{init\_indices}[i+1,j+1]$ \\
        $\mathtt{final\_indices}[\mathtt{init\_indices}[i,j]] = \mathtt{final\_indices}[i,j]$ \\
        $\mathtt{len} += \mathtt{init\_indices}[i+1,j+1][0]-i$\\
    }
    \uIf{$i-1,j-1 \in \mathtt{final\_indices}$}{
        $\mathtt{final\_indices}[i,j] = \mathtt{final\_indices}[i-1,j-1]$ \\
        $\mathtt{init\_indices}[\mathtt{final\_indices}[i,j]] = \mathtt{init\_indices}[i,j]$ \\
        $\mathtt{len} += i-\mathtt{final\_indices}[i-1,j-1][0]$\\
    }
    $\mathtt{ms\_dist}[\mathtt{len}] = \min(\mathtt{ms\_dist}[\mathtt{len}], d(i,j))$
}
\end{algorithm}

\subsection{Slack Distance and Sliding Windows}
Our approach to constructing a filtration out of trajectories of observables of a dynamical system for recovering topological information is superficially related to the {\em sliding window} setup developed in \citep{Per15}. 
The key difference that makes our approach more efficient is the {\em slack} allowing measuring the distance between shifted fragments of two trajectories. 
In the sliding window approach, there is no slack allowing the trajectories to be aligned after a time shift: in some sense, the distance between points pulled back from the distance between the sampled trajectories is quasi-conformal to the original metric structure on $M$. 

In our approach, the presence of slack that makes the sampled trajectory $y^n(x)$ close to the sampled trajectory $y^n(\phi(\tau, x))$ for a bounded $\tau$ in essence adjusts the underlying metric structure to be small {\em along} the trajectories of $v$. This anisotropic rescaling of the underlying metric should be compared to the constructions of \citep{FerryO}, where the loosening of the Riemannian metric along the fibers of a map $M\to N$ leads to the Gromov-Hausdorff convergence of the resulting metric structures on $M$ to those on $N$ under some connectivity assumptions.
We conjecture that the surprising efficacy of our algorithm can be explained along those lines, - which is a separate project, to be pursued elsewhere.
\section{Computational Examples}
The rest of the note deals with the computational examples, where we attempt to reconstruct the topology of the underlying phase space, using the finite samples (of size $N$) of trajectories, finite observations of length $n$ along the trajectories, and slack distance for some slack parameter $t$.  The results are shown as persistence diagrams with respect to the allowable distance $r$ between the fragments of the trajectories.

\iffalse
\subsection{Geodesics on the $2$-Sphere}

We consider a second order dynamical system on the 2-sphere $\Sph^2 \subset \R^3$ given by the geodesic equations on the sphere when it is  embedded as a submanifold of $\R^3$ in the standard sense. The geodesic differential equation can be written as a second order differential equation on $\R^3$ as
% \[   \ddot{x} - \langle \ddot{x}, x \rangle x = 0\]

It is well known that the solutions of this differential equation are the great circles on the sphere. We take $N=100$ segments of geodesics trajectories on the sphere, that start at one of the poles of the spheres and do not cross the other pole. We also add some noise to the trajectories. These trajectories are plotted in Figure \ref{fig:sphere}.

\begin{wrapfigure}[14]{r}{0.63\textwidth}
\centering
\includegraphics[height=1.4in]{}
\includegraphics[height=1.4in]{}
\caption{\small Segments of geodesics on the sphere corrupted by noise. Right: Persistence diagram generated from noisy geodesics on the sphere. Here $N=75, n=30, t=3, T=1.5$.}\label{fig:sphere}
\end{wrapfigure}

 The persistence diagram generated by the MSSD algorithm for this set of trajectories is also shown in Figure \ref{fig:sphere}. The persistence diagram clearly reflects the fact that the homologies of a sphere $\Sph^d$ have rank $1$ in dimensions $0$ and $d$, and vanish elsewhere.
% \begin{equation}
%     H_k(\Sph^2,\kk) = \begin{cases}
%         \R, \ \text{ if $k=0$ or $2$,} \\
%         0, \ \text{ otherwise}
%     \end{cases}
% \end{equation}
\fi

\subsection{Gradient Dynamical System: Height Function on the 2-Sphere}
We apply our computational pipeline to a gradient dynamical system on the 2-sphere $\Sph^2 \subset \R^3$ embedded as a submanifold of $\R^3$ in the standard sense. Consider the height function \[\Sph^2 \ni (x,y,z) \mapsto h(x,y,z) = z \in \R\] taking each point on the sphere to its z coordinate, or the point's \textit{height} assuming the z-axis is aligned in the vertical direction.

The flow lines of this dynamical system are just the meridians, connecting the poles, the critical points of the function $z$ on the $2$-sphere. 
We take $N=400$ segments of  trajectories on the sphere, that start at randomly drawn points. These trajectories are plotted in Figure \ref{fig:sphere}.

\begin{wrapfigure}[14]{r}{0.63\textwidth}
\centering
\includegraphics[height=1.4in]{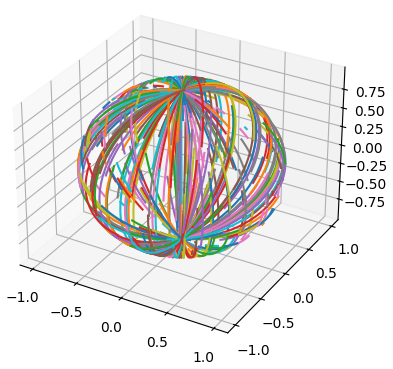}
\includegraphics[height=1.4in]{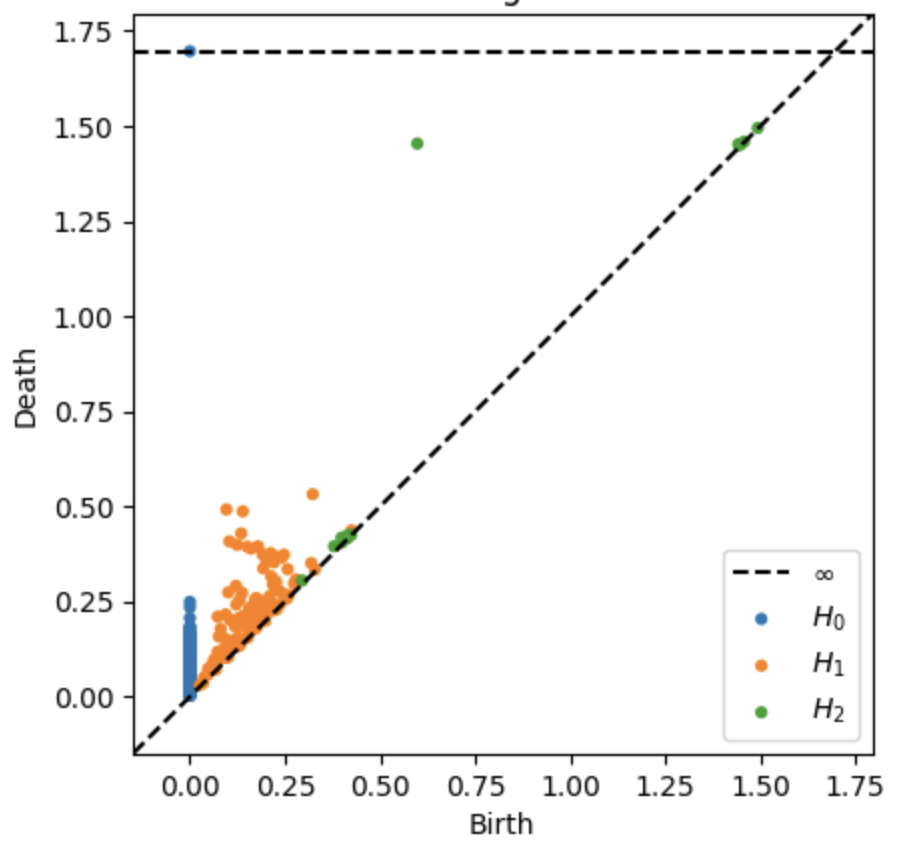}
\caption{\small Segments of trajectories of the gradient of the height function on the sphere. Right: Persistence diagram generated from these trajectories on the sphere. Here $N=400, n=15, t=10, T=1.5$.}\label{fig:sphere}
\end{wrapfigure}

 The persistence diagram generated by the MSSD algorithm for this set of trajectories is also shown in Figure \ref{fig:sphere}. The persistence diagram clearly reflects the fact that the homologies of a sphere $\Sph^d$ have rank $1$ in dimensions $0$ and $d$, and vanish elsewhere.

\subsection{Gradient Dynamical System: Random Trigonometric Function on the Torus}
We consider the example of a gradient dynamical system on the 2-torus $\mathbb{T}^2 \subset \R^3$ embedded in $\R^3$. We can parametrize the torus with two parameters $(\theta, \phi) \in [0, 2\pi] \times [0, 2\pi]$.

% and embed it in $\R^3$ using the embedding
%\begin{align*}
%%\label{eq:tor_emb}
%    x(\theta, \phi) &= (R+r\cos(\theta))\cos(\phi) \\
%    y(\theta, \phi) &= (R+r\cos(\theta))\sin(\phi) \\
%    z(\theta, \phi) &= r\sin(\theta)
%\end{align*}

\begin{wrapfigure}[14]{r}{0.63\textwidth}
\centering
\includegraphics[height=1.4in, width=.3\textwidth]{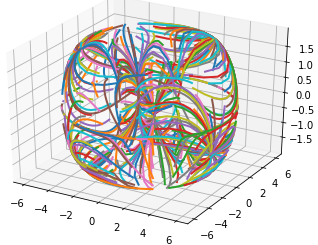}
\includegraphics[height=1.3in, width=.3\textwidth]{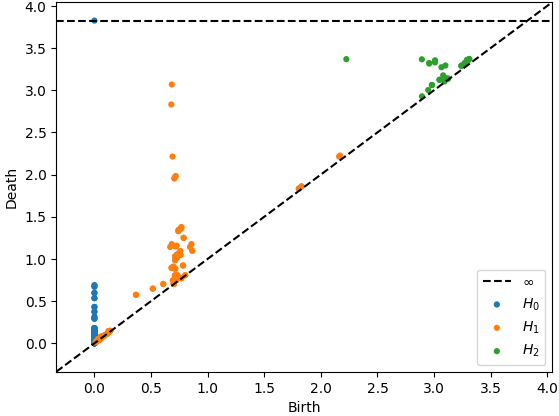}
\caption{\small Segments of trajectories of random vector field on 2-torus. Right: Persistence diagram generated from these segments. Here $N=400, n=25, t=3, T=2.5$}
\label{fig:torus}
\end{wrapfigure}

We used a random finite Fourier polynomial 
\(
    f(\theta, \phi) = \sum_{i=0}^{k}\sum_{j=0}^{k} a_{i,j}\cos((i+1)\theta+\theta_{i,j})cos((j+1)\phi+\phi_{i,j})
\)
where the coefficients $a_{i,j}$ are sampled from the standard normal distribution. We use $k = 2$ in this article.

%, and $\theta_{i,j}, \phi_{i,j}$ are uniformly distributed on $[0,2\pi]$ and used its gradient vector field as $v$.
%of this function can be written as
%\begin{align*}
%%\label{eq:tor_ode}
%    \dot{\theta} &= \frac{\partial f}{\partial \theta} = \sum_{i=0}^{k}\sum_{j=0}^{k} a_{i,j}(i+1)\sin((i+1)\theta)\cos((j+1)\phi) \\
%    \dot{\phi} &= \frac{\partial f}{\partial \phi} = \sum_{i=0}^{k}\sum_{j=0}^{k} a_{i,j}(j+1)\cos((i+1)\theta)\sin((j+1)\phi)
%\end{align*}
%

We generated $N= 400$ trajectories, sampled from random initial conditions. Along each trajectory, $n=25$ samples were taken, for $T=2.5$. In this experiment, we took $V=\Real^3$, with the observables given by an embedding of the torus into the $3$-dimensional Euclidean space. 

The resulting trajectories are plotted in the left display of the Figure \ref{fig:torus}. The persistence diagram corresponding to $t=3$ shown on the right display of the Figure \ref{fig:torus}. 

The homology groups of the $2$-torus are of rank $1$ in the dimensions $0$ and $2$, and $2$ in dimension $1$. This is clearly captured in the persistence diagram Figure \ref{fig:torus}.

%\begin{align*}
%    H_k(\mathbb{T}^2) = \begin{cases}
%        \R, \ \text{ if $k=0,2$} \\
%        \R^2, \ \text{ if $k=1$} \\
%        0, \ \text{ otherwise}
%    \end{cases}
%\end{align*}
%

To check our algorithm for scalar output, we considered a random observation function given by 
\(g(\theta, \phi) = b_0 x(\theta, \phi) + b_1 y(\theta, \phi) + b_2 z(\theta, \phi)
\)
where the coefficients $(b_0, b_1, b_2)$ are sampled from a Gaussian distribution and the functions $x,y,z$ are Cartesian coordinates for our embedding of the $2$-torus in $\Real^3$.
% defined in earlier in this section.%\eqref{eq:tor_emb}. 

\begin{wrapfigure}[13]{r}{0.63\textwidth}
\centering
\includegraphics[height=1.4in, width=.3\textwidth]{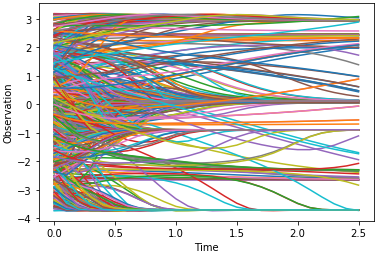}
\includegraphics[height=1.4in, width=.3\textwidth]{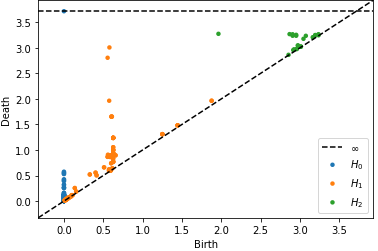}
\caption{\small Segments of observations of trajectories of random vector field on 2-torus. Right: Persistence diagram generated from these segments. Here $N=650, n=25, t=1, T=2.5$}
\label{fig:torus_emb}
\end{wrapfigure}

As a result, we get a set of time series plotted on the left display of the Figure \ref{fig:torus_emb}. 
The persistence diagram is shown on the right, still clearly matching the homology of $\mathbb{T}^2$.

\subsection{Chaotic Dynamical System: Lorenz System}

We now look at the standard example of a chaotic dynamical system given by the Lorenz dynamical system in $\R^3$ \citep{gilmore}:\begin{align*}
%\label{eq:lorenz_ode}
\dot{x} &= \sigma (y - x) \\
\dot{y} &= x(\rho - z) - y \\
\dot{z} &= xy - \beta z
\end{align*}
Here $\sigma, \rho, \beta$ are parameters which we set to $10, 28, \frac{8}{3}$ respectively. 

\begin{wrapfigure}[13]{r}{0.6\textwidth}
\centering
\includegraphics[height=1.4in, width=.28\textwidth]{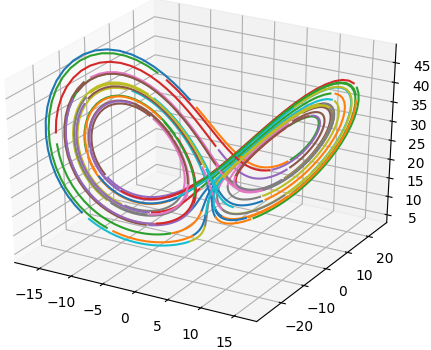}
\includegraphics[height=1.4in, width=.28\textwidth]{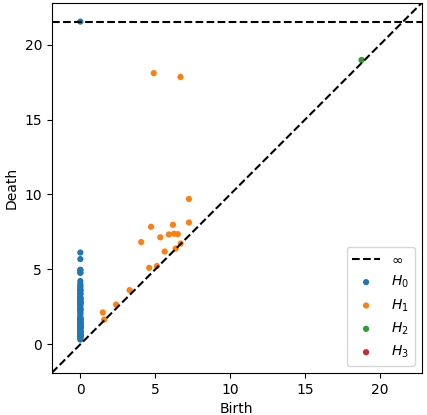}
\caption{\small Segments of trajectories of the Lorenz system. Right: Persistence diagram generated from these segments. Here $N=100, n=25, t=20, T=0.25$}
\label{fig:lorenz}
\end{wrapfigure}

It is known that the system exhibits chaotic behaviour for these values of parameters. We simulate one long trajectory from this dynamical system and split it into 100 pieces (the hyperbolicity of the system makes this essentially equivalent to randomly sampling shorter trajectories). The resulting trajectories are plotted on the left display of the Figure \ref{fig:lorenz}. 

For the slack distance derived from the tautological output function $g=(x,y,z)$, the persistence diagram is shown as the right display of the Figure \ref{fig:lorenz}. 

These results require some interpretation. While the ambient space carrying the vector field is $\Real^3$, the trajectory is essentially supported by a $2$-dimensional {\em template} \citep{birman1983}, homotopy equivalent to the wedge of two circles. As we do not sample trajectories starting far away from this template, it is not surprising we recover its topology: indeed, the ranks of the homology groups should be $1$ in the dimension $0$, $2$ in the dimension $1$, and vanish otherwise, in agreement with the right display on  the Figure \ref{fig:lorenz}.

We also repeat the experiments after passing the trajectories through a random polynomial output of degree $3$
%\[
%g(x, y, z) = \sum_{i=1}^{3}\sum_{k=1}^3 a_{i,k} x^k 
%\]
with coefficients sampled from a Gaussian distribution. This gives us a set of time series observations as shown in Figure \ref{fig:lorenz_emb}. The corresponding persistence diagram still matches the homology of a wedge of two circles.

\begin{wrapfigure}[13]{r}{0.62\textwidth}
\centering 
\includegraphics[height=1.4in, width=.28\textwidth]{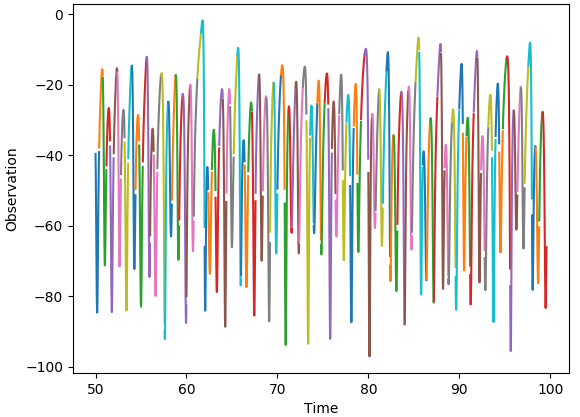}
\includegraphics[height=1.4in, width=.28\textwidth]{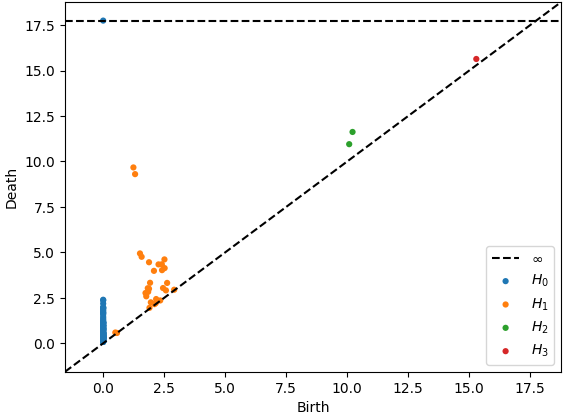}
\caption{\small Segments of observations of trajectories of the Lorenz system. Right: Persistence diagram generated from these segments. Here $N=150, n=25, t=10, T=0.25$}
\label{fig:lorenz_emb}
\end{wrapfigure}

The results become even more interesting when we increase the length $T$ of the fragments of trajectories. When this time $T$ is large enough, the segments of trajectories will hit the template's barrier (where the flow bifurcates to two different branches in the template) twice. Conceptually, this would lead to a restructured template that will now have the homotopy type of the wedge of $4$ circles. 

\begin{wrapfigure}[14]{r}{0.6\textwidth}
\centering
\includegraphics[height=1.4in, width=.28\textwidth]{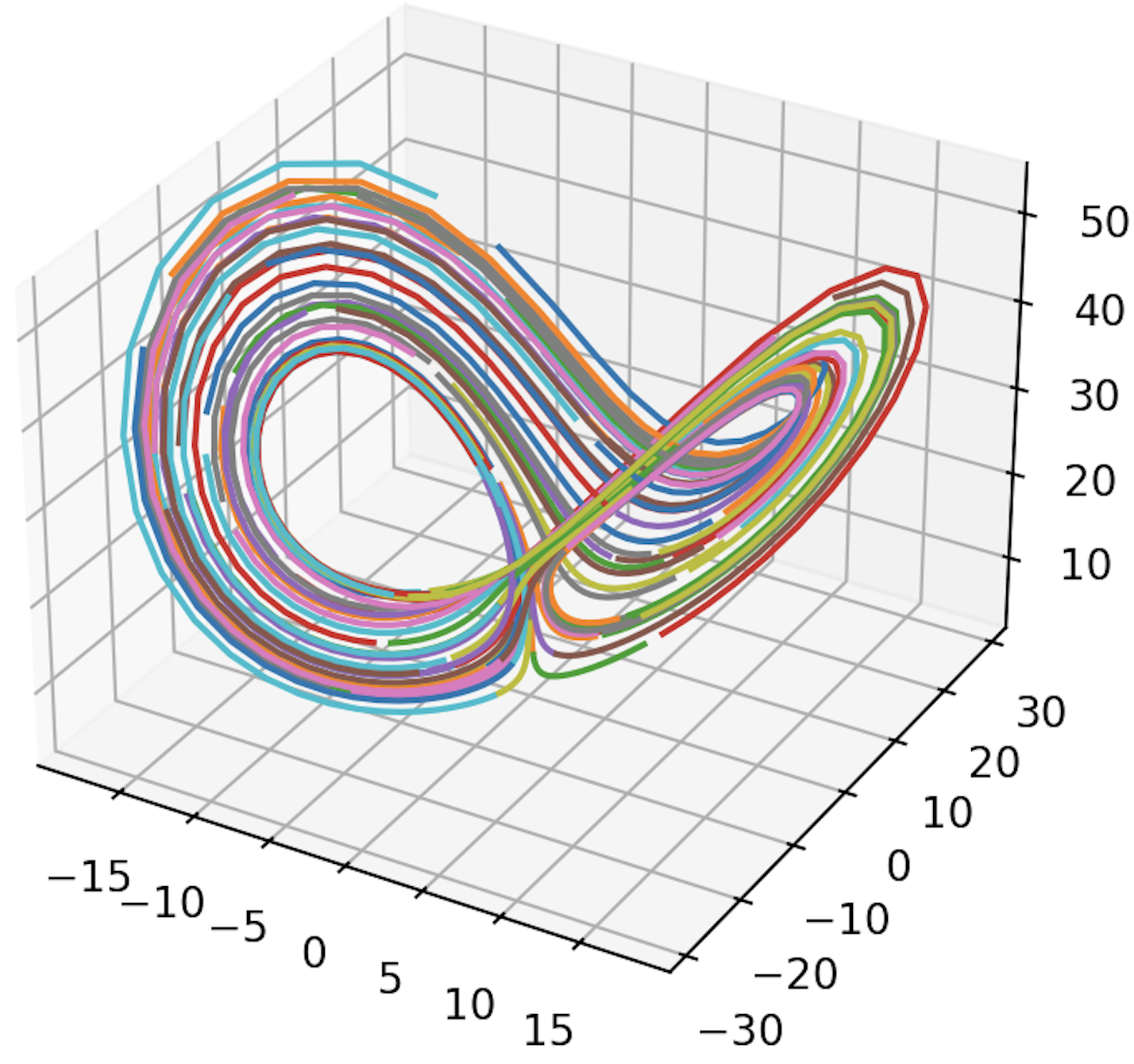}
\includegraphics[height=1.4in, width=.28\textwidth]{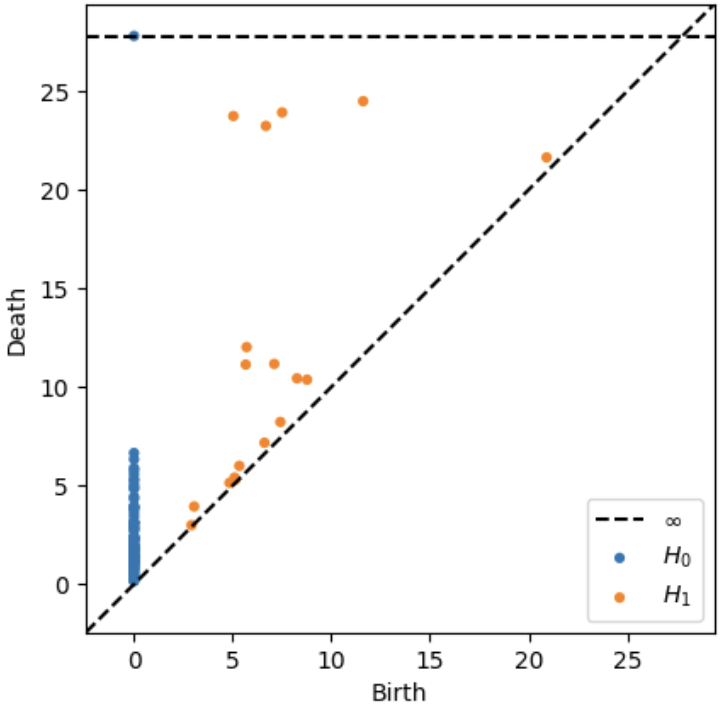}
\caption{\small Segments of longer trajectories of the Lorenz system. Right: Persistence diagram generated from these segments. Here $N=100, n=25, t=20, T=0.5$}
\label{fig:lorenz_long}
\end{wrapfigure}

And indeed, this is exactly what we observe in the Figure \ref{fig:lorenz_long}. 

Perhaps the most intuitive way to think about this phenomenon is to compare with the picture a topological data analysis would paint of a Cantor set $X\subset [0,1]$ obtained by iteratively removing the middle third from the intervals, starting with the unit one: at the low resolution $1/18\leq r<1/6$, the union of $r$-balls with centers at the points of $X$ are homotopy equivalent to two points; at the resolution $1/54\leq r<1/18$, that is $4$ points and so on. Given that the attractor of the the dynamical systems of Lorentz type are cantori, this doubling of the corresponding homologies is not unexpected after all.

\subsection*{Acknowledgements}
The authors were partially supported by the AFOSR MURI {\em Hybrid Dynamics - Deconstruction and Aggregation}.

\newpage
\bibliography{ma_ymb}

\end{document}